\title{An Information-Theoretic Approach to Nonparametric Estimation, Model Selection, and Goodness of Fit}
\author{Alexis Akira Toda \thanks{Department of Economics, Yale University.  Email: \href{mailto:alexisakira.toda@yale.edu}{alexisakira.toda@yale.edu}} \thanks{I am deeply indebted to my thesis advisor, Donald Brown.  The financial supports from the Cowles Foundation, the Nakajima Foundation, and Yale University are greatly acknowledged.}}
\date{This Version: \today}
\numberwithin{equation}{section}
\newcommand{\diff}{\mathrm{d}}
\renewcommand{\e}{\mathrm{e}}
\begin{document}
\maketitle

\begin{abstract}
This paper applies the recently axiomatized Optimum Information Principle (minimize the Kullback-Leibler information subject to all relevant information) to nonparametric density estimation, which provides a theoretical foundation as well as a computational algorithm for maximum entropy density estimation.  The estimator, called \emph{optimum information estimator}, approximates the true density arbitrarily well.  As a by-product I obtain a measure of goodness of fit of parametric models (both conditional and unconditional) and an \emph{absolute} criterion for model selection, as opposed to other conventional methods such as AIC and BIC which are \emph{relative} measures.
\end{abstract}

\section{Introduction}\label{sec:intro}
This paper applies the \emph{Optimum Information Principle} recently axiomatized by \cite{toda-maxent-bayes} by refining Jaynes's axioms of plausible reasoning \cite[Chapters 1 and 2]{jaynes2003} to nonparametric density estimation.  The optimum information principle, which is more fundamental than Bayesian inference \cite[]{bayes1763}, Fisher's maximum likelihood principle \cite[]{fisher1912},\footnote{Although maximum likelihood is attributed to \cite{fisher1912}, it was already used by Laplace and Gauss a century before.}  Jaynes's maximum entropy principle \cite[]{jaynes1957a}, and Kullback's principle of minimum discrimination information \cite[p.~37]{kullback1959}, prescribes to minimize the information gain (measured by the Kullback-Leibler information \cite[]{kullback-leibler1951}) of updating from a prior to posterior subject to all relevant information.  In particular, for nonparametric density estimation, it means to maximize the Shannon entropy \cite[]{shannon1948} of a density subject to sample moment constraints.  Such an information-theoretic approach to density estimation has been known (see \cite{wu2003} and the references therein) but has not yet gained popularity in econometrics possibly due to the lack of a theoretical foundation as well as a simple algorithm of computation.  This paper provides both.  As a by-product I obtain a measure of goodness of fit of parametric models and a criterion for model selection, which is closely related to BIC \cite[]{schwarz1978} but distinct from other conventional methods such as AIC \cite[]{akaike1974} and BIC in that it is an \emph{absolute} criterion, not relative.  AIC and BIC can only pick the best model among the competing ones, but it may be the case that all models are poor. Our measure tells the poor fit if all models are indeed poor.

\section{Optimum Information Estimator}\label{sec:OIE}
Suppose that $\set{X_n}$ are i.i.d.~random variables taking values in $\R^K$, with an unknown density $f(x)$.  Let $\set{x_n}_{n=1}^N$ be the realization of $\set{X_n}$.  Our task is to obtain a reasonable estimate $\hat{f}$ of $f$ from the data.  Since the data $\set{x_n}$ is a finite set, it is compact and discrete.  Therefore there is no reason to believe that the true distribution $f$ has an unbounded support or that $f$ is discontinuous.  Hence throughout the paper let us assume that $f$ is continuous and compactly supported on $S\subset B(R)$, where $B(R)$ denotes the closed ball with radius $R>\max_n\norm{x_n}$ with center at the origin.

\paragraph{Sample moments}
Sample moments of $f$ are computed by $\hat{m}_i=\frac{1}{N}\sum_{n=1}^Nx_{ni}$, $\hat{m}_{ij}=\frac{1}{N}\sum_{n=1}^Nx_{ni}x_{nj}$, etc.  In general let us introduce the multi-index \cite[p.~236]{folland1999} of nonnegative integers $\alpha=(\alpha_1,\dotsc,\alpha_K)$ and let the $\alpha$-th sample moment denoted by
$$\hat{m}_\alpha=\frac{1}{N}\sum_{n=1}^Nx^\alpha:=\frac{1}{N}\sum_{n=1}^Nx_{n1}^{\alpha_1}\dotsb x_{nK}^{\alpha_K}.$$
We set $\abs{\alpha}=\sum_{k=1}^K\abs{\alpha_k}$, $\alpha!=\alpha_1!\dotsb\alpha_K!$, etc.  Even more generally, if $T:S\to \R^L$ is a Lebesgue measurable function, the sample moments of $T$ can be defined by $\hat{T}:=\frac{1}{N}\sum_{n=1}^NT(x_n)$.  The function $T$ represents the moments that the econometrician considers relevant for inference.  If there is no particular reason to choose otherwise, it is natural to set $T(x)=(x^\alpha)_{\abs{\alpha}\le A}$, where $A$ is typically an even integer from 2 to 10.

\paragraph{Optimum information estimator defined}
The \emph{optimum information estimator} (OIE) of $f$, denoted by $\hat{f}$, is defined by
\begin{align}
&\hat{f}=\argmin_{g\in L^1(S)} \int_S g(x)\log g(x)\diff x\notag\\
&\text{subject to}~\int_S T(x) g(x)\diff x=\hat{T},~\int_S g(x)\diff x=1,\label{eq:1}
\end{align}
where $\diff x$ denotes the Lebesgue measure and $\int_S g\log g$ is the Kullback-Leibler information of the density $g$ with respect to the uniform prior on $S$.\footnote{By convention we set $0\log 0=0$ and $x\log x=\infty$ if $x<0$.}  Hereafter all integrations are carried out on the compact set $S$ and therefore we drop the subscript $S$ from the integral sign.  Minimizing the information as in \eqref{eq:1} is optimal from an information theoretic as well as a Bayesian point of view.  See \cite{toda-maxent-bayes} for a theoretical justification of this definition.

\paragraph{Computing the optimum information estimator}
The minimization problem \eqref{eq:1} is a special case of an entropy-like minimization problem, which can be solved by Fenchel duality.\footnote{For a good account of Fenchel duality, see \cite{rockafellar1970}, \cite{borwein-lewis2006} (finite-dimensional), and \cite{luenberger1969} (infinite-dimensional).}  Since $\int g=1$, \eqref{eq:1} is equivalent to
\begin{align}
&\min_{g\in L^1(S)} \int [g(x)\log g(x)-g(x)]\diff x\notag\\
&\text{subject to}~\int T(x) g(x)\diff x=\hat{T},~\int g(x)\diff x=1.\tag{P}\label{eq:primal}
\end{align}
By Corollary 2.6 and Example 5.6(ii) of \cite{borwein-lewis1991}, the dual problem of \eqref{eq:primal} is given by
\begin{equation}
\max_{z\in\R,\lambda\in\R^L}\left[z+\lambda'\hat{T}-\int \e^{z+\lambda'T(x)}\diff x\right].\tag{D}\label{eq:dual}
\end{equation}
I assume that a regularity condition of the Fenchel duality theorem holds and therefore the dual problem \eqref{eq:dual} has a solution and the primal and dual value coincide.  One such regularity condition is that $\hat{T}$ belongs to the interior of $T(S)=\set{T(x)|x\in S}$, which is very weak.\footnote{See \cite{bot-wanka2006} for a nice discussion on regularity conditions.}  Since the objective function in \eqref{eq:dual} contains an integral over the compact set $S$, it is always finite and $C^\infty$ in $(z,\lambda)$ \cite[Proposition B.5]{toda2010}.  Since $(z,\lambda)$ are unconstrained,
the maximum is obtained by differentiating and setting equal to zero.  Partially differentiating \eqref{eq:dual} with respect to $z$, we obtain
$$1-\int \e^{z+\lambda'T(x)}\diff x=0\iff z=-\log\left(\int \e^{\lambda'T(x)}\diff x\right).$$
Substituting this into \eqref{eq:dual}, and using the Fenchel duality theorem \cite[Corollary 2.6]{borwein-lewis1991}, after some algebra we obtain:
\begin{thm}[Fenchel Duality]\label{thm:dual}
\begin{align}
H_{\min}&:=\text{Minimized Kullback-Leibler information}\notag \\
&=\min_{g\in L^1(S)}\Set{\int g\log g|\int Tg=\hat{T},\int g=1}\notag\\
&=\max_{\lambda\in\R^L}\left[\lambda'\hat{T}-\log\left(\int \e^{\lambda'T(x)}\diff x\right)\right].\label{eq:2.2}
\end{align}
\end{thm}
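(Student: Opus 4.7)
The plan is to verify the Fenchel conjugate computation and then apply the Borwein--Lewis theorem directly, substituting out the Lagrange multiplier on the normalization constraint to recover the one-parameter family indexed by $\lambda$.

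First, I would compute the Fenchel conjugate of the integrand $\phi(g)=g\log g - g$, extended to $+\infty$ for $g<0$ and to $0$ at $g=0$. A direct calculation (first-order condition $y-\log g = 0$) gives $\phi^*(y)=\sup_{g\ge 0}\set{yg - g\log g + g} = \e^y$, attained at $g=\e^y$. This explains why the paper replaced the entropy integrand $g\log g$ by $g\log g - g$: the resulting conjugate is a pure exponential with no spurious additive constant, which is what makes \eqref{eq:dual} take its clean form.

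Next, I would invoke Corollary 2.6 together with Example 5.6(ii) of \cite{borwein-lewis1991} applied to the integral functional $g\mapsto\int\phi(g)\diff x$ on $L^1(S)$, subject to the affine constraints $\int T(x) g(x)\diff x = \hat{T}$ and $\int g(x)\diff x = 1$. Attaching dual variables $\lambda\in\R^L$ and $z\in\R$ to these constraints respectively, the abstract dual reads
\[
\sup_{z,\lambda}\left[z + \lambda'\hat{T} - \int\phi^*\bigl(z+\lambda'T(x)\bigr)\diff x\right] = \sup_{z,\lambda}\left[z + \lambda'\hat{T} - \int\e^{z+\lambda'T(x)}\diff x\right],
\]
which is exactly \eqref{eq:dual}. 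The step I would handle most carefully is the constraint qualification that guarantees zero duality gap and dual attainment; as the author notes, it suffices that $\hat{T}$ lie in the interior of $T(S)$, and I would simply adopt that assumption.

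With strong duality in hand, I would eliminate $z$ exactly as in the text: the first-order condition $\partial/\partial z = 0$ yields $\e^z\int\e^{\lambda'T(x)}\diff x = 1$, i.e.\ $z = -\log\int\e^{\lambda'T(x)}\diff x$, and at this value the final integral equals $1$. Hence the dual objective reduces to $\lambda'\hat{T} - \log\int\e^{\lambda'T(x)}\diff x - 1$. On the primal side, any feasible $g$ satisfies $\int g = 1$, so the primal value equals $H_{\min} - 1$. Equating the two and cancelling the common $-1$ produces precisely \eqref{eq:2.2}. The only genuine obstacle is verifying the regularity condition for Fenchel duality; once it is granted, the rest is the elementary chain of substitutions already carried out in the text.
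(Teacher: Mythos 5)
Your proposal is correct and follows essentially the same route as the paper: pass to the integrand $g\log g-g$, invoke the Borwein--Lewis duality result under the constraint qualification $\hat{T}\in\interior T(S)$, and eliminate the multiplier $z$ on the normalization constraint. The only difference is that you spell out the conjugate computation $\phi^*(y)=\e^y$ and the cancellation of the constant $-1$ on both sides, which the paper compresses into ``after some algebra.''
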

The duality theorem \ref{thm:dual} was first exploited by \cite{gibbs1902},\footnote{Interestingly, Gibbs was the thesis advisor of Irving Fisher, the first Yale Ph.D.~in economics.} although its full understanding had to wait for the development of modern convex analysis half a century later.  The reduced dual problem \eqref{eq:2.2} has a unique solution $\hat{\lambda}$ if $\dim T(S)=L$ (\ie, $T(S)$ is not contained in a lower dimensional affine space) because in that case the function $\phi(\lambda)=\log\left(\int \e^{\lambda'T(x)}\diff x\right)$ is strictly convex \cite[Proposition B.4]{toda2010}.  In most applications the maximization problem \eqref{eq:2.2} has no closed-form solutions.  However, since the objective function is differentiable and concave \cite[Proposition B.5]{toda2010}, a numerical solution $\hat{\lambda}$ can be easily obtained by the Newton-Raphson algorithm \cite[Chapter 10]{luenberger1969}.

Differentiating the objective function in \eqref{eq:2.2} with respect to $\lambda$ and setting equal to zero, $\hat{\lambda}$ satisfies $\hat{T}=\E_{\hat{f}}[T(X)]=\int T(x)\hat{f}(x)\diff x$,
where
\begin{equation}
\hat{f}(x)=\frac{\e^{\hat{\lambda}'T(x)}}{\int \e^{\hat{\lambda}'T(x)}\diff x}.\label{eq:2.4}
\end{equation}
The function $\hat{f}$ is the optimum information estimator.  To verify this, substitute $\hat{f}$ defined by \eqref{eq:2.4} for $g$ in \eqref{eq:2.2} and we can see that the equality is satisfied.

The functional form in \eqref{eq:2.4} shows that the optimum information estimator $\hat{f}$ belongs to an exponential family, and one might guess that the Lagrange multiplier $\hat{\lambda}$ is the maximum likelihood estimator of that family.  This conjecture is indeed true.

\begin{prop}\label{prop:ML}
$\hat{\lambda}$ is a maximum likelihood estimator\footnote{More precisely, it is a quasi-maximum likelihood (QML, see \cite{huber1967}) estimator because the model is misspecified.} for the exponential family $\set{f(x;\lambda)}_{\lambda\in\R^L}$, where $f(x;\lambda)\propto \e^{\lambda'T(x)}$.
\end{prop}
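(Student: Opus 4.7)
The plan is to compute the log-likelihood for the exponential family directly and recognize that, after dividing by the sample size, it coincides (up to an additive constant in $\lambda$) with the concave objective in the reduced dual problem \eqref{eq:2.2}. Since Theorem \ref{thm:dual} already identifies $\hat{\lambda}$ as the unique maximizer of that objective, the proposition will follow immediately.

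Concretely, I would first write the normalized density as $f(x;\lambda)=\e^{\lambda'T(x)-\phi(\lambda)}$, where $\phi(\lambda)=\log\int\e^{\lambda'T(x)}\diff x$ is the log partition function. Given the sample $\set{x_n}_{n=1}^N$, the log-likelihood is
\begin{equation*}
\ell(\lambda)=\sum_{n=1}^N\log f(x_n;\lambda)=\sum_{n=1}^N\bigl[\lambda'T(x_n)-\phi(\lambda)\bigr]=N\bigl[\lambda'\hat{T}-\phi(\lambda)\bigr],
\end{equation*}
where the last equality uses the definition $\hat{T}=\frac{1}{N}\sum_{n=1}^N T(x_n)$. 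Maximizing $\ell(\lambda)$ over $\lambda\in\R^L$ is therefore equivalent to maximizing $\lambda'\hat{T}-\phi(\lambda)$, which is exactly the reduced dual problem \eqref{eq:2.2}.

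Next I would invoke Theorem \ref{thm:dual}, which asserts that $\hat{\lambda}$ solves that maximization; hence $\hat{\lambda}$ is an MLE. If $\dim T(S)=L$, the strict convexity of $\phi$ noted after Theorem \ref{thm:dual} makes the objective strictly concave, so $\hat{\lambda}$ is the unique MLE. I would also note the footnote caveat: since the true $f$ need not belong to the exponential family $\set{f(\cdot;\lambda)}$, this is formally a quasi-MLE in the sense of \cite{huber1967}, but the computation is unaffected.

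There is no real obstacle here; the argument is a two-line identification of objective functions. The only point requiring care is keeping the normalization straight so that the constant $\phi(\lambda)$ (rather than the unnormalized $\int\e^{\lambda'T}$) appears in $\ell(\lambda)$, because it is precisely this log partition term that matches the $-\log(\int\e^{\lambda'T})$ in the dual \eqref{eq:2.2}.
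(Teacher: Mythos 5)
Your proposal is correct and follows essentially the same route as the paper: write the log-likelihood of the normalized exponential family as $N\bigl[\lambda'\hat{T}-\log\bigl(\int\e^{\lambda'T(x)}\diff x\bigr)\bigr]$ and observe it is exactly the reduced dual objective in \eqref{eq:2.2}, so its maximizer $\hat{\lambda}$ is the (quasi-)MLE. Your added remarks on uniqueness under $\dim T(S)=L$ and the QML caveat are consistent with the paper's surrounding discussion.
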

\begin{proof}
The log likelihood of the model $f_\lambda(x):=\e^{\lambda'T(x)}/\int \e^{\lambda'T(x)}\diff x$ is
\begin{align*}
\log \mathcal{L}(\lambda)&=\sum_{n=1}^N\log f_\lambda(x_n)=\sum_{n=1}^N\left[\lambda'T(x_n)-\log\left(\int \e^{\lambda'T(x)}\diff x\right)\right]\\
&=N\left[\lambda'\hat{T}-\log\left(\int \e^{\lambda'T(x)}\diff x\right)\right],
\end{align*}
which is precisely the expression in \eqref{eq:2.2} multiplied by the sample size $N$.
\end{proof}
We should not misinterpret this result such that the optimum information principle is a special case of maximum likelihood, however, for two reasons. First, I showed \cite[Theorem 5]{toda-maxent-bayes} that maximum likelihood is (approximately) implied by the optimum information principle: we should therefore interpret the above result such that a particular optimum information estimator may coincide to the density corresponding to a particular maximum likelihood estimator.  Second, and more importantly, maximum likelihood is valid only if the model contains the truth (\ie, $f_\lambda=f$ for some $\lambda$), but a model is never true, as \cite{box1976} puts ``Since all models are wrong the scientist cannot obtain a ``correct" one by excessive elaboration".  Hence the maximum likelihood here is actually the quasi-maximum likelihood \cite[]{huber1967}.  On the other hand, from a Bayesian point of view the optimum information principle makes no reference to the truth.

Let us summarize the above results in a theorem and an algorithm to compute the optimum information estimator:
\begin{thm}\label{thm:OIE}
Let $X_n\sim f$, i.i.d.~with $f$ compactly supported on $S\subset\R^K$, $\set{x_n}$ their realizations, $T:S\to \R^L$ be Lebesgue measurable,\\ $\hat{T}:=\frac{1}{N}\sum_{n=1}^NT(x_n)$, $\hat{T}\in\interior T(S)$, and $\dim T(S)=L$.  Then
\begin{enumerate}
\item there exists a unique optimum information estimator $\hat{f}$ defined by \eqref{eq:1},
\item $\hat{f}(x)\propto \e^{\hat{\lambda}'T(x)}$, where $\hat{\lambda}$ is the maximum likelihood estimator of the exponential family $f(x;\lambda)\propto \e^{\lambda'T(x)}$,
\item $\hat{\lambda}$ can be computed by the Newton-Raphson algorithm.
\end{enumerate}
\end{thm}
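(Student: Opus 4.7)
The three parts of Theorem \ref{thm:OIE} consolidate what has been derived in the running text, so the plan is mainly to assemble the pieces and fill in two items the discussion leaves implicit: uniqueness of the primal minimizer and convergence of the Newton iteration.

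For part 1 I would invoke the Fenchel duality conclusion of Theorem \ref{thm:dual}. The hypothesis $\hat T\in\interior T(S)$ serves as the constraint qualification referenced in the Bot-Wanka discussion, so the dual problem \eqref{eq:2.2} attains its maximum at some $\hat\lambda\in\R^L$ and the primal and dual optimal values coincide. Defining $\hat f$ by \eqref{eq:2.4}, I would check (i) feasibility, which follows from the first-order stationarity $\nabla\phi(\hat\lambda)=\hat T$ for $\phi(\lambda):=\log\int \e^{\lambda'T(x)}\diff x$, and (ii) optimality, which follows by substituting \eqref{eq:2.4} into \eqref{eq:2.2} as already sketched in the paragraph following \eqref{eq:2.4}. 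Uniqueness of $\hat f$ then follows from strict convexity of $g\mapsto\int g\log g$ on the convex feasible set, and uniqueness of $\hat\lambda$ from strict convexity of $\phi$, itself a consequence of $\dim T(S)=L$.

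Part 2 needs no new work: the formula \eqref{eq:2.4} exhibits $\hat f\propto\e^{\hat\lambda'T(x)}$, and Proposition \ref{prop:ML} already identifies $\hat\lambda$ with the quasi-MLE of that exponential family.

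For part 3, the plan is to verify the standard convergence hypotheses for Newton-Raphson applied to the dual objective $\Phi(\lambda):=\lambda'\hat T-\phi(\lambda)$. This function is $C^\infty$ by the cited Proposition B.5 of \cite{toda2010}, and its Hessian equals $-\nabla^2\phi(\lambda)=-\mathrm{Var}_{f_\lambda}[T(X)]$, which is negative definite because $\dim T(S)=L$ rules out any direction in which $T$ is almost surely constant under $f_\lambda$. Together with the unique maximizer $\hat\lambda$ supplied by part 1, these are exactly the conditions for local quadratic convergence given in Chapter 10 of \cite{luenberger1969}; a backtracking line search would handle globalization if needed. The sole subtle step is the attainment/strong-duality passage in part 1, where the convex-analytic machinery (Borwein-Lewis 1991, Bot-Wanka 2006) does the real work; this will be the main bookkeeping obstacle in the write-up, though it is purely citation management rather than genuine mathematical difficulty.
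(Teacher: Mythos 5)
Your proposal is correct and follows essentially the same route as the paper, which presents Theorem \ref{thm:OIE} as a summary of the preceding development: strong duality under the constraint qualification $\hat{T}\in\interior T(S)$, the explicit form \eqref{eq:2.4}, uniqueness of $\hat{\lambda}$ from strict convexity of $\phi$ when $\dim T(S)=L$, the identification with the (quasi-)MLE via Proposition \ref{prop:ML}, and Newton--Raphson justified by smoothness and concavity of the dual objective. Your added details (strict convexity of $g\mapsto\int g\log g$ for uniqueness of $\hat{f}$, and the negative-definite Hessian $-\mathrm{Var}_{f_\lambda}[T(X)]$ for Newton convergence) merely make explicit what the paper leaves implicit.
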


\begin{algorithm}[Computation of the optimal information estimator]
\noindent
\begin{enumerate}[Step 1.]
\item\label{item:step.1} Choose the relevant support $S$ and moments $T:S\to\R^L$ to exploit.  Choose $S=B(R)$ with $R>\max_n\norm{x_n}$ and $T(x)=(x^\alpha)_{\abs{\alpha}\le A}$, where $A$ is typically 10, unless there is a strong reason to do otherwise.
\item\label{item:step.2} For each $A=2,4,\dots,A$, compute the optimal information estimator by the Newton-Raphson algorithm (Theorem \ref{thm:OIE}).
\item\label{item:step.3} Since the optimum information estimator corresponds to the maximum likelihood distribution of an exponential family (Proposition \ref{prop:ML}), it is natural to use BIC \cite[]{schwarz1978} to select the best estimate among $A=2,4,\dots,A$.  This is the final optimum information estimator.
\end{enumerate}
\end{algorithm}
It is acceptable to simplify Step \ref{item:step.2} by estimating only one density (corresponding to $A$) and omitting Step \ref{item:step.3}.  (More discussion on BIC is given in Section \ref{sec:model}.)

\paragraph{Estimating conditional densities}
Economists are usually interested in the conditional density of a variable $Y$ (\eg, income) conditional on some other variables $X$ (\eg, sex, age, education, experience, etc.) or the conditional expectation $\E[Y|X]$ rather than the unconditional distribution.  Estimation of these quantities are straightforward. For instance, the conditional density of $Y$ given $X$ is estimated by $\hat{f}(y|x):=\frac{\hat{f}(x,y)}{\hat{f}(x)}$, and the conditional expectation is estimated by $\hat{\E}[Y|X]:=\int y \hat{f}(y|x)\diff y$.

\paragraph{Asymptotic properties of the optimum information estimator}
Since the quasi maximum likelihood estimator is a special case of an $M$-estimator \cite[Chapter 5]{vaart1998}, as the sample size $N$ gets large, $\hat{\lambda}$ converges in probability to the $\lambda$ that solves the population counterpart of \eqref{eq:2.2}\footnote{This quantity is known as the \emph{pseudo-true value} \cite[p.~146]{cameron-trivedi2005}.}, and so does the Kullback-Leibler information:
$$\int f\log\frac{f}{\hat{f}}=:H(f;\hat{f})\pto H(f;f_\lambda):=\int f\log\frac{f}{f_\lambda}.$$
The quantity $2NH(f;\hat{f})$ is asymptotically distributed as noncentral $\chi^2$ with $L$ degrees of freedom and noncentrality parameter $2NH(f;f_\lambda)$ \cite[pp.~97--107]{kullback1959}.  Now we show that the optimum information estimator $\hat{f}$ asymptotically approximates the true distribution $f$ arbitrarily well, but we need a lemma first.  To avoid unnecessary complication I assume that the true density $f$ is positive everywhere on its support.

\begin{lem}\label{lem:approx}
Let $f$ be a positive, continuous density on the compact set $S\subset \R^K$.  Then there exists $A>0$ such that the exponential family $f(x;\lambda)\propto \e^{\lambda'T(x)}$, where $T(x)=(x^\alpha)_{\abs{\alpha}\le A}$, contains a density $f_0$ that arbitrarily approximates $f$ uniformly over $S$.
\end{lem}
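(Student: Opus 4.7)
The plan is to use the Stone--Weierstrass theorem applied to $\log f$, then show that exponentiating and normalizing preserves the uniform approximation. The positivity of $f$ is crucial here, since it is what allows us to pass to logarithms.

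First, since $f$ is positive and continuous on the compact set $S$, the function $\log f$ is continuous on $S$. By the Stone--Weierstrass theorem, polynomials are dense in $C(S)$ under the sup norm (they contain constants and separate points of $\mathbb{R}^K$). So for every $\delta>0$ I can find an integer $A$ and coefficients $(\lambda_\alpha)_{|\alpha|\le A}$ such that the polynomial
$$P(x)=\sum_{|\alpha|\le A}\lambda_\alpha x^\alpha=\lambda'T(x)$$
satisfies $\|P-\log f\|_\infty<\delta$ on $S$.

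Second, I pass this approximation through the exponential. Writing $P(x)-\log f(x)=\eta(x)$ with $\|\eta\|_\infty<\delta$, the elementary inequality $|\mathrm{e}^y-1|\le |y|\mathrm{e}^{|y|}$ gives
$$|\mathrm{e}^{P(x)}-f(x)|=f(x)|\mathrm{e}^{\eta(x)}-1|\le (\max_S f)\,\delta\mathrm{e}^\delta.$$
So $\mathrm{e}^P$ approximates $f$ uniformly, with an error going to $0$ as $\delta\to 0$.

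Third, I normalize to obtain an element of the exponential family. Set $Z=\int_S \mathrm{e}^{P(x)}\diff x$ and $f_0(x)=\mathrm{e}^{P(x)}/Z$. Since $\int_S f\,\diff x=1$,
$$|Z-1|=\Bigl|\int_S(\mathrm{e}^P-f)\diff x\Bigr|\le |S|(\max_S f)\,\delta\mathrm{e}^\delta,$$
where $|S|$ is the Lebesgue measure of $S$. A straightforward triangle-inequality decomposition
$$|f_0(x)-f(x)|\le \frac{|\mathrm{e}^{P(x)}-f(x)|}{Z}+f(x)\left|\frac{1}{Z}\cdot Z \,-\, 1\right|\cdot\frac{1}{Z}\bigl(\text{appropriate rearrangement}\bigr)$$
shows that $\|f_0-f\|_\infty$ is controlled by a constant times $\delta\mathrm{e}^\delta$, and hence can be made arbitrarily small. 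Choosing $\delta$ accordingly yields the desired uniform approximation.

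The only ``obstacle'' is bookkeeping: making sure the normalization constant $Z$ does not spoil the sup-norm approximation. This is handled cleanly by the fact that $f$ is a probability density, so $Z\to 1$ as the polynomial approximation improves. The essential content of the lemma is really just Stone--Weierstrass combined with positivity of $f$.
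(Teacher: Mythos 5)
Your proof is correct and follows essentially the same route as the paper's: apply Stone--Weierstrass to $\log f$ (valid because $f$ is positive and continuous on compact $S$), exponentiate, and renormalize, with your version simply making the bookkeeping explicit. The only cosmetic blemish is the garbled triangle-inequality display for $|f_0-f|$; the clean statement is $|f_0(x)-f(x)|\le \frac{1}{Z}\abs{\e^{P(x)}-f(x)}+f(x)\frac{\abs{Z-1}}{Z}$, which, together with your bound $\abs{Z-1}\le \abs{S}(\max_S f)\delta\e^{\delta}$ (so $Z$ is bounded away from $0$ for small $\delta$), gives exactly the control you claim.
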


\begin{proof}
Since $f$ is positive and continuous on $S$, it has a positive minimum and $\log f$ is continuous on $S$.  By the Stone-Weierstrass theorem \cite[p.~139]{folland1999}, we can take $\lambda\in\R^L$ and $C>0$ such that $\abs{\log f(x)-\lambda'T(x)-\log C}$ is arbitrarily small on $S$.  Then we can make $\norm{f-C\e^{\lambda'T}}$ small.  By normalizing the density, we obtain $\abs{f(x)-f_0(x)}<\epsilon$ for all $x\in S$ for some $f_0$ in the exponential family.
\end{proof}

\begin{thm}\label{thm:approx}
Let $f$ be a positive, continuous density on the compact set $S\subset \R^K$.  For any $\epsilon>0$, there exists a number $A>0$ such that for any $\delta>0$, the optimum information estimator $\hat{f}$ corresponding to the moments $T(x)=(x^\alpha)_{\abs{\alpha}\le A}$ satisfies
$$\lim_{N\to\infty}\Pr\left(H(f;\hat{f})>\epsilon+\delta\right)=0.$$
\end{thm}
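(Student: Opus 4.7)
The plan is to reduce the claim to two facts already available in the paper: the uniform-approximation guarantee of Lemma~\ref{lem:approx} and the consistency $\hat\lambda\pto\lambda^\ast$ stated in the paragraph preceding the theorem, which implies $H(f;\hat f)\pto H(f;f_{\lambda^\ast})$, where $\lambda^\ast$ is the pseudo-true minimizer of $H(f;f_\lambda)$.

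First I would fix $A$. Since $f$ is positive and continuous on the compact set $S$, $f_{\min}:=\min_{x\in S}f(x)>0$. Pick $\eta>0$ with $\eta<f_{\min}/2$ and $2\eta/f_{\min}<\epsilon$. Applying Lemma~\ref{lem:approx} with approximation tolerance $\eta$ yields an $A$ and a density $f_0=f_{\lambda_0}$ in the exponential family generated by $T(x)=(x^\alpha)_{\abs{\alpha}\le A}$ such that $\sup_{x\in S}\abs{f(x)-f_0(x)}<\eta$.

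Next I would convert this sup-norm estimate into a Kullback-Leibler estimate. Since $\eta<f_{\min}/2\le f(x)/2$ for $x\in S$, we have $\abs{(f_0-f)/f}\le 1/2$ pointwise, so the elementary inequality $\abs{\log(1+u)}\le 2\abs{u}$ for $\abs{u}\le 1/2$ gives $\abs{\log(f/f_0)}\le 2\eta/f_{\min}$ on $S$. Integrating against $f$ (which has total mass one) yields $H(f;f_0)\le 2\eta/f_{\min}<\epsilon$. Because $\lambda^\ast$ minimizes $H(f;f_\lambda)$ over $\R^L$ by definition of the pseudo-true value, it follows that $H(f;f_{\lambda^\ast})\le H(f;f_{\lambda_0})<\epsilon$. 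Finally, for this fixed $A$ and any $\delta>0$,
\[\Pr\!\left(H(f;\hat f)>\epsilon+\delta\right)\le\Pr\!\left(H(f;\hat f)-H(f;f_{\lambda^\ast})>\delta\right)\to 0\]
as $N\to\infty$, giving the claim.

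I expect the main obstacle to be the second step: the sup-norm-to-KL conversion depends essentially on the strict positivity hypothesis on $f$, which is why the theorem requires it. Without a uniform lower bound on $f$, an $\eta$-close $f_0$ could force $\log(f/f_0)$ to be unbounded, and the clean bound $H(f;f_0)\le 2\eta/f_{\min}$ would fail. The remaining steps are mechanical once Lemma~\ref{lem:approx} and the QMLE consistency (from standard $M$-estimation theory cited above the statement) are in hand.
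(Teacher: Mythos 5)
Your proof is correct and follows essentially the same route as the paper: apply Lemma~\ref{lem:approx} to obtain a uniform approximant $f_0$ in the exponential family, note that the pseudo-true $f_{\lambda^\ast}$ satisfies $H(f;f_{\lambda^\ast})\le H(f;f_0)<\epsilon$, and invoke $H(f;\hat f)\pto H(f;f_{\lambda^\ast})$ to conclude. The only difference is in the sup-norm-to-KL step, where the paper uses a second-order Taylor expansion of $\log(1+u)$ while you use the elementary bound $\abs{\log(1+u)}\le 2\abs{u}$ for $\abs{u}\le 1/2$ together with the positive lower bound on $f$ --- a minor, if anything cleaner, variation on the same argument.
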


\begin{proof}
Let $\set{f_\lambda}$ be exponential family in Lemma \ref{lem:approx} and $\hat{f}$ be the optimum information estimator of $f$ using the moments $\abs{\alpha}\le A$.  Take $f_0\in \set{f_\lambda}$ that uniformly approximates $f$.  Since $H(f;\hat{f})\pto H(f;f_\lambda)$, where $f_\lambda$ minimizes the Kullback-Leibler information among the exponential family $\set{f_\lambda}$, we get $H(f;f_\lambda)\le H(f;f_0)$.  Hence we only need to show that we can choose $A$ such that $H(f;f_0)<\epsilon$.  Since $f, f_0$ are positive on $S$, we get
\begin{align*}
H(f;f_0)&=-\int f\log\frac{f_0}{f}=-\int f\log\left(1+\frac{f_0-f}{f}\right)\\
&=-\int f\left[\frac{f_0-f}{f}-\frac{1}{2}\left(\frac{f_0-f}{f}\right)^2+o((f_0-f)^2)\right]\\
&=\int\left[\frac{1}{2f}(f_0-f)^2+o((f_0-f)^2)\right],
\end{align*}
which can be made arbitrarily small by Lemma \ref{lem:approx}.
\end{proof}

\section{Goodness of Fit and Model Selection}\label{sec:model}

This section is an application of the optimum information estimator to evaluate the goodness of fit of parametric models and select the best fitting model.  I consider two cases separately, models for the unconditional density and the conditional density.
\paragraph{Unconditional models}
Suppose that we have $M$ competing models denoted by $\mathcal{M}=\set{1,2,\dotsc,M}$, where model $m$ has a parametric density $f_m(x;\theta_m)$ with parameter $\theta_m\in\Theta_m$.  Given data, how should we choose between these models, and how should we evaluate the goodness of fit?

Ideally, by the optimum information principle the goodness of fit should be evaluated by the minimized Kullback-Leibler information,
\begin{equation}
H_{\min}:=\min_{\theta\in\Theta}\int f(x)\log\frac{f(x)}{f(x;\theta)}\diff x,\label{eq:model.1}
\end{equation}
where $\set{f(x;\theta)}_{\theta\in\Theta}$ is a particular parametric model \cite[Theorem 5]{toda-maxent-bayes}.  However, this approach is infeasible because we do not know the true $f$.  As an approximation, in his seminal paper \cite{akaike1974} approximated the quantity $\int f\log f(x;\hat{\theta})$, which appears in the expansion of \eqref{eq:model.1} and $\hat{\theta}$ denotes the optimum parameter value, and derived his celebrated Akaike Information Criterion (AIC).

We take a different approach.  Since the optimum information principle implies Bayesian inference \cite[Theorem 4]{toda-maxent-bayes}, which is an exact implication as opposed to the approximate implication for maximum likelihood \cite[Theorem 5]{toda-maxent-bayes}, the Bayesian approach to model selection by \cite{schwarz1978} can be fully justified from an information-theoretic point of view.  Now the optimum information estimator $\hat{f}$ in \eqref{eq:2.4}, being a maximum likelihood distribution of a particular exponential family (Theorem \ref{thm:OIE}), is also a particular parametric model.  Therefore the goodness of fit and model selection of the competing models $\mathcal{M}=\set{1,2,\dotsc,M}$ reduces to the model selection of $\set{0}\cup\mathcal{M}$, where model 0 corresponds to the exponential family in Proposition \ref{prop:ML} that generates the optimum information estimator \eqref{eq:2.4}.  Model 0 serves as our benchmark model.

An approximation of the \emph{evidence}\footnote{This term is due to \cite{jaynes1956}.} (the logarithm of the Bayesian likelihood) of model $m$ is given by \cite[p.~461]{schwarz1978}\footnote{Strictly speaking, \cite{schwarz1978} proved \eqref{eq:model.2} only for the exponential family, but since any continuous, compactly supported density can be arbitrarily approximated by an exponential family (Lemma \ref{lem:approx}), \eqref{eq:model.2} is true for any such density.}
\begin{equation}
E_m:=\log\mathcal{L}_m(\hat{\theta}_m)-\frac{1}{2}K_m\log N,\label{eq:model.2}
\end{equation}
where $\mathcal{L}_m$ is the log likelihood of model $m$, $\hat{\theta}_m$ the maximum likelihood estimator, $K_m$ the number of unknown parameters (the dimension of $\theta_m$), and $N$ the sample size.\footnote{I was tempted to call the quantity in \eqref{eq:model.2} TIC, for the obvious reason, but this acronym is reserved for the Takeuchi Information Criterion \cite[]{takeuchi1976}.  Besides, the evidence is $-1/2$ of BIC and hence there is no reason to introduce a new name.} In particular, for the benchmark model 0, we have $K_0=\dim T(S)=L$.  The larger the evidence $E$ is, the better the model fits.

By Laplace's Principle of Indifference \cite[]{laplace1812} (which is a particular axiom of plausible reasoning in \cite{toda-maxent-bayes}), let us assign prior probability $\frac{1}{M+1}$ to models $m=0,1,\dotsc,M$.  Then, by Schwarz's fundamental proposition \cite[p.~462]{schwarz1978} and Bayes's rule (which is implied by the optimum information principle \cite[Theorem 4]{toda-maxent-bayes}), the posterior probability of model $m$ given data $D=\set{x_n}$ is approximately given by
\begin{equation}
P(m|D)=\frac{\e^{NE_m}}{\sum_{m=0}^M\e^{NE_m}}.\label{eq:model.3}
\end{equation}
$P(m|D)$ is a measure of model fit.  If $P(m|D)$ is large for some $m=1,2,\dots,M$, then it is a good model.  If $P(0|D)$ is large, then all models are poor.  The fundamental difference between our evidence $E$ and the posterior model probability $P(m|D)$ and other information criteria such as AIC and BIC is that while AIC and BIC are \emph{relative} measures of model fit, evidence $E$ and $P(m|D)$ are \emph{absolute} measures.  By using AIC and BIC we can select the best model among the competing models, but it might be the case that all models are poor.  On the other hand, our approach tells us each model's absolute fit.

\paragraph{Conditional models}
Suppose now that the models are conditional, meaning that they only describe a conditional density $f(y|x;\theta)$, which is more important in economics.  The optimum information principle gives us a measure of goodness of fit and model selection in this case, too.  The evidence $E_m$ in \eqref{eq:model.2} is changed to
\begin{equation}
E_m:=\log\mathcal{L}_m(\hat{\theta}_m;Y|X)+\log\mathcal{L}(X)-\frac{1}{2}K_m\log N,\label{eq:model.4}
\end{equation}
where $\log\mathcal{L}_m(\hat{\theta}_m;Y|X)$ denotes the maximized conditional log likelihood of model $m$ and $\log\mathcal{L}(X)$ denotes the log likelihood of data $X=\set{x_n}$.  Since the true density of $\set{X_n}$ is unknown, $\log\mathcal{L}(X)$ is a nuisance parameter.  But since it is common across all models, if we substitute $E_m$ in \eqref{eq:model.4} into the fundamental formula \eqref{eq:model.3}, $\exp(\log\mathcal{L}(X))=\mathcal{L}(X)$ in the numerator and denominator cancel out (!).  Therefore the posterior model probability $P(m|D)$ can just be computed by using the maximized conditional log likelihood.  Let us summarize this in a theorem:

\begin{thm}[Goodness of fit and model selection]\label{thm:model}
Let $\mathcal{M}=\set{0,1,2,\dotsc,M}$ be $M$ competing models with parametric conditional density $f_m(y|x;\theta_m)$, where model 0 is the benchmark model corresponding to the optimum information estimator $\hat{f}(y|x):=\frac{\hat{f}(x,y)}{\hat{f}(x)}$.  Let $\hat{\theta}_m$ be the maximum likelihood estimator of model $m$ and define the conditional evidence of model $m>0$ by
$$E_m:=\log\mathcal{L}_m(\hat{\theta}_m;Y|X)-\frac{1}{2}K_m\log N,$$
where $K_m=\dim\theta_m$ and $N$ is the sample size, and
$$E_0:=\log\mathcal{L}(\hat{\theta}_{X,Y};X,Y)-\log\mathcal{L}(\hat{\theta}_X;X)
-\frac{1}{2}(K_{X,Y}-K_X)\log N,$$
where $\log \mathcal{L}(\hat{\theta}_Z;Z)$ denotes the maximized log likelihood of the exponential family corresponding to the optimum information estimator for the density $f(z)$ ($z=x$ or $z=(x,y)$), and $K_Z$ is its parameter dimension.  Then, the posterior probability of model $m$ is
$$P(m|D)=\frac{\e^{NE_m}}{\sum_{m=0}^M\e^{NE_m}}.$$
\end{thm}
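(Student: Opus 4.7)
The plan is to reduce the conditional model comparison to two applications of Schwarz's BIC approximation of the log evidence \cite{schwarz1978}: one for each of the $M$ competing conditional models, and one for the benchmark model 0 whose conditional density is defined as the quotient $\hat{f}(y|x)=\hat{f}(x,y)/\hat{f}(x)$ of two optimum information estimators. The key observation, already flagged in the paragraph preceding the theorem statement, is that the unknown marginal density of $X$ enters each $E_m$ as a common additive nuisance and therefore cancels in the posterior ratio.

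First, for each competing model $m\ge 1$, I would factor the joint log likelihood as
$$\log\mathcal{L}_m(\theta_m;X,Y)=\log\mathcal{L}_m(\theta_m;Y|X)+\log\mathcal{L}(X),$$
where $\log\mathcal{L}(X)$ does not depend on $\theta_m$ and is common across all models. Applying Schwarz's evidence approximation \eqref{eq:model.2} to the joint model and noting that the MLE of $\theta_m$ maximizes the conditional piece produces exactly \eqref{eq:model.4}.

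Second, for the benchmark model 0, I would use the identity
$$\log\mathcal{L}_0(Y|X)=\sum_{n=1}^N\log\hat{f}(y_n|x_n)=\log\mathcal{L}(\hat{\theta}_{X,Y};X,Y)-\log\mathcal{L}(\hat{\theta}_X;X),$$
where by Proposition \ref{prop:ML} the two terms on the right are the maximized log likelihoods of the exponential families associated with the joint and marginal OIEs. Applying Schwarz's approximation separately to the joint model (with $K_{X,Y}$ parameters) and to the marginal model (with $K_X$ parameters), and taking their difference, yields the BIC penalty $\frac{1}{2}(K_{X,Y}-K_X)\log N$, which together with the common nuisance $\log\mathcal{L}(X)$ produces the stated expression for $E_0$.

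Finally, under the uniform prior $1/(M+1)$ on $\set{0,1,\dotsc,M}$ justified by Laplace's principle of indifference, Bayes's rule (which is implied by the optimum information principle) gives $P(m|D)\propto\e^{NE_m}$, and the common $\log\mathcal{L}(X)$ inside every $E_m$ cancels between numerator and denominator, yielding the formula in the theorem. The main obstacle is justifying the effective parameter count $K_{X,Y}-K_X$ for the benchmark's conditional: I would base this on the additivity of the Schwarz log-evidence approximation under the factorization $P(X,Y|m=0)=P(Y|X,m=0)\,P(X|m=0)$, so that the Laplace-approximation penalty for the conditional equals the difference of the penalties for the joint and marginal OIEs. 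Although the quotient $\hat{f}(x,y)/\hat{f}(x)$ need not integrate exactly to one in $y$ at finite $N$, this does not affect the derivation, which works directly with the log-likelihood identity displayed above.
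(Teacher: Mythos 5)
Your proposal is correct and follows essentially the same route as the paper, whose argument for this theorem is the discussion immediately preceding it: factor the joint likelihood into conditional and marginal pieces, apply Schwarz's evidence approximation \eqref{eq:model.2}, note that the common nuisance $\log\mathcal{L}(X)$ cancels in \eqref{eq:model.3}, and define the benchmark's evidence $E_0$ as the difference of the joint and marginal OIE evidences with penalty $\frac{1}{2}(K_{X,Y}-K_X)\log N$. Your extra care about the parameter count and the quotient $\hat{f}(x,y)/\hat{f}(x)$ only makes explicit what the paper leaves implicit; it does not change the approach.
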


Our information-theoretic approach to the goodness of fit of conditional distribution is much simpler than the frequentist approach such as \cite{andrews1997}, \cite{fan1998}, and \cite{delgado-stute2008} since it requires no bootstrapping, no kernel density estimation, or no complicated integration.  All we need is maximum likelihood estimation.  Another advantage is that the frequentist approach can only test a null hypothesis, thereby accepting or rejecting a model, but our approach can evaluate an arbitrary number of models simultaneously.  A related method to our approach of model comparison is the likelihood ratio test (see \cite{wilks1938} for nested models and \cite{vuong1989} for non-nested models), but it only applies to \emph{two} models and it provides no information for the goodness of fit.  The information-theoretic approach is applicable to any number of non-nested models and gives an absolute measure of goodness of fit.

\paragraph{AIC or BIC?}
I used BIC \cite[]{schwarz1978} to define the evidence of a model in \eqref{eq:model.2}.  However, there are other information criteria such as AIC, AICc, BIC, CIC, DIC, EIC, QIC, TIC, etc.~(see \cite{burnham-anderson2004a} and \cite{anderson2008}), among which AIC (AICc) and BIC are by far the most applied.  Anderson \cite[p.~160]{anderson2008} dismisses BIC as having ``nothing linking it to information theory", which is incorrect, but Anderson's book was written before my discovery \cite[]{toda-maxent-bayes}.  I believe that BIC is the most fundamental concept because Bayesian inference is an exact implication of the optimum information principle and \cite{schwarz1978} uses only one approximation to derive BIC (approximating log likelihood), whereas the approach of \cite{akaike1974} is conceptually closer to the optimum information principle but it invokes \emph{two} approximations (approximating the Kullback-Leibler information and log likelihood).  But let us not be dogmatic: it is equally acceptable to use AIC or AICc \cite[]{sugiura1978}, which are also derived by information theory.

\section{Performance of OIE with Real Data}
To the best of my knowledge, the only applications of the maximum entropy estimation in economics and finance are \cite{wu2003} and the references therein.  The estimated maximum entropy density superimposed on the histogram of 1999 U.S.~family income in Figure 1 of \cite{wu2003} shows a good fit, as predicted by the theory (Theorem \ref{thm:approx}).  Maximum entropy estimation (information-theoretic estimation) is much more popular outside economics, in particular physics.  Jaynes \cite[p.~125]{jaynes2003} mentions that the Bayesian analysis (synonymous to information-theoretic analysis) of nuclear magnetic resonance (NMR)\footnote{NMR is applied in medicine for making 2D and 3D images of the inside of the human body for diagnostic purposes, which is known as magnetic resonance imaging (MRI).  (``Nuclear" is dropped because it is not a politically correct word.)} data obtained by his student \cite[]{bretthorst1988} showed an orders of magnitude (\ie, at least 10 fold) improvement of resolution over Fourier transforms method which was conventional at the time, and because of this surprising improvement Bretthorst's result was not believed for a long time.

The value of the information-theoretic nonparametric estimation (and model selection) method proposed in this paper compared to conventional methods such as kernel density estimation should ultimately be judged by their relative performances in analyzing real data.  However, there are a few reasons to believe that the optimum information estimator is superior:
\begin{enumerate}
\item The optimum information principle fully exploits the available information as opposed to other \emph{ad hoc} methods.  For instance, kernel density estimation is essentially a local linear regression and hence uses only \emph{local} information.
\item Kernel density estimation has a lot of arbitrariness with regard to the choice of the kernel and the bandwidth, whereas the only arbitrariness in the information-theoretic density estimation is the number of moments to include as constraints.  Even this arbitrariness can be removed by selecting the optimal number of constraints by BIC.
\item Since the optimum information estimation reduces to the maximum likelihood estimation of an exponential family (in the present case), it is fully parametric, computationally straightforward, and free from the curse of dimensionality.
\end{enumerate}

\section{Concluding Remarks}
Statistics and econometrics are sciences of extracting information from data.  Hence an inference method is valuable if and only if it is useful in analyzing real data, and therefore an inference method requires no interpretation, and no justification except practical usefulness: we should refrain from being too dogmatic as exemplified by the heated frequentist/Bayesian debate in the past.  Comparisons of the performance of the optimum information estimator and other methods using real data are welcome, although it is beyond the scope of the present paper.


\end{document}